\def\titlename{\huge Approximate amenability  of Segal algebras II}
\title{\titlename}
\def\authname{Mahmood Alaghmandan}
\author{{\normalsize\sc \authname}}
\definecolor{blue1}{RGB}{32,78,170}
\definecolor{blue2}{RGB}{93,92,160}
\definecolor{blue3}{RGB}{40,51,202}
\definecolor{blue4}{RGB}{0,0,0}
\definecolor{purple1}{RGB}{128,0,128}
\definecolor{El}{rgb}{.4,.9,1}
\normalfont\fontsize{12}{15}\bfseries}{\thesection}{1em}{}
\titleformat{\chapter}[display]
  {\normalfont\sffamily\huge\bfseries\color{blue4}}
  {\chaptertitlename\ \thechapter}{20pt}{\Huge}
\Large\color{blue4}}
\Large\color{blue4}}
\newcommand{\ma}[1]{\textcolor{blue2}{\textsf{#1}}}
            \newcounter{pulse}[section]
\newtheorem{theorem}[pulse]{\bf \textsf{Theorem}}
\newtheorem{lemma}[pulse]{\bf \textsf{Lemma}}
\newtheorem{prop}[pulse]{\bf \textsf{Proposition}}
\newtheorem{dummy-eg}[pulse]{\bf \textsf{Example}}
\newtheorem{dummy-rem}[pulse]{\bf \textsf{Remark}}
\newtheorem{dummy-def}[pulse]{\bf \textsf{Definition}}
\newcommand{\ques}{{\medskip\par\noindent {\bf \textsf{Question.}~}}}
\newenvironment{proof}{\noindent{\it Proof.}\/}{\hfill$\Box$\newline\ignorespacesafterend}
\newcommand{\supp}{\operatorname{supp}}
\newcommand{\cA}{{\cal A}}
\newcommand{\Sp}[2]{\Delta({#1} {#2})}
\newcommand{\norm}[1]{\Vert #1 \Vert}
\begin{document}

\maketitle
 \begin{abstract}
We prove that every proper Segal algebra of a SIN group  is not approximately amenable. 
\vskip1.0em
{\bf\textsf{Keyword:}} Segal algebras; approximate amenability;  SIN groups; Commutative Banach algebras.
\vskip1.0em
{\bf \textsf{AMS codes:}} 46H20, 43A20.

 \end{abstract}
 

\vskip2.5em

Gourdeau in \cite{go2} showed that a Banach algebra $A$ is \ma{amenable} if and only if every bounded derivation $D:A\rightarrow X$ for any Banach $A$-bimodule $X$ can be approximated by a net of inner derivations. A weaker version of  this notion is \ma{approximate amenability} of Banach algebras that is, a Banach algebra $A$ is  {approximately amenable} if and only if every bounded derivation $D:A\rightarrow X^*$ for every Banach dual $A$-bimodule $X^*$ can be approximated by a net of inner derivations.  The concept of approximate amenability first was introduced and studied in \cite{aa}. 

Similar to amenability, different algebras were studied for their approximate amenability property including \ma{Segal algebras} (for the definition of Segal algebras and their basic properties, look at \cite{re2}.) In \cite{da}, Dales and Loy studied some specific Segal algebras on the commutative groups $\Bbb{T}$ and $\Bbb{R}$. They   proved that those Segal algebras are not approximately amenable; consequently,  they suggested that the same should be true for every \ma{proper} Segal algebra on these groups.  We call a Segal algebra proper if it not equal to the group algebra.

Subsequently, Choi and Ghahramani, \cite{ch}, proved that this conjecture is true by showing that   proper Segal algebras on $\Bbb{T}^d$ and $\Bbb{R}^d$ are not approximately amenable (for any dimension $d$). To do so, they developed a criterion  for ``ruling out approximate amenability" of Banach algebras. 
Ghahramani, in the  Banach algebra 2011 conference,  conjectured that every proper Segal algebra of a locally compact group cannot be approximately amenable.

In \cite{ma}, the author applied the criterion developed in \cite{ch} to show that in fact, every proper Segal algebra of a locally compact abelian group is not approximately amenable. Also, applying the hypergroup structure on the dual of compact groups, it was proved that for some classes of compact groups, including $\operatorname{SU(2)}$, every proper Segal algebra is not approximately amenable. 

In this short manuscript we prove that this conjecture is actually true for every SIN group. Recall that a locally compact group $G$ is called a \ma{SIN group} if there exists a topological basis of conjugate invariant neighbourhoods of the identity element of the group $G$. This class of locally compact groups includes abelian, compact, and discrete groups.

\begin{theorem}\label{t:main-thm}
Every proper Segal algebra of a SIN group is not approximately amenable.
\end{theorem}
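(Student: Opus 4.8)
The plan is to prove this by leveraging the criterion from Choi–Ghahramani together with the structural features of SIN groups.

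The plan is to use the fact that every SIN group belongs to the class of \FIAB{} groups, for which the closure $B$ of the inner automorphism group $\operatorname{Inn}(G)$ in $\Aut(G)$ is compact, and to exploit this compactness to reduce the non-commutative problem to a commutative one of the type already settled in \cite{ma}. Averaging the conjugation action over the compact group $B$ produces a contractive projection $P\colon L^1(G)\to ZL^1(G)$ onto the closed subalgebra of $B$-central functions, the integral $P(f)=\int_B \beta(f)\,d\beta$ converging precisely because $B$ is compact. Since $P$ fixes central functions and satisfies $P(g\ast f\ast h)=g\ast P(f)\ast h$ for central $g,h$, it is a central-bimodule projection, and it restricts to a contractive projection of any Segal algebra $S(G)$ onto its central part.

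The first step is to manufacture an \emph{unbounded} central approximate identity inside $S(G)$. Any Segal algebra carries an approximate identity $(e_\lambda)$ inherited from $L^1(G)$; applying $P$ termwise yields central elements $(P(e_\lambda))$ which, using the uniform-in-$\beta$ convergence of $\beta(e_\lambda)\ast f$, still form an approximate identity for $S(G)$. Because $S(G)$ is proper it admits no bounded approximate identity, so $(P(e_\lambda))$ is necessarily unbounded in the Segal norm; read through the Gelfand/Plancherel theory of $ZL^1(G)$, whose spectrum for an \FIAB{} group is the commutative hypergroup of conjugacy classes $\conj$, this unboundedness is exactly the ``spreading to infinity'' on which the criterion of \cite{ch} feeds.

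With a central approximate identity in hand I would run the criterion of \cite{ch} directly inside $S(G)$, choosing every witnessing element to be central. The advantage of centrality is that all the convolution identities required by the criterion collapse to the commutative computation on $\conj$: the Gelfand transform sends the central part of $S(G)$ into $C_0(\conj)$ with a strictly finer norm, and the unbounded central approximate identity supplies the disjointly supported, suitably normalised but norm-unbounded sequence that \cite{ch} shows cannot exist in an approximately amenable algebra. This is precisely the configuration already exploited for locally compact abelian groups and for compact groups such as $\operatorname{SU}(2)$ in \cite{ma}, now made available for all SIN groups by the central reduction.

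The step I expect to be the main obstacle is this last one: ensuring that the commutative data on $\conj$ genuinely satisfy the hypotheses of the criterion. Two points need care. First, that the central approximate identity is unbounded in the Segal norm and not merely in the $L^1$ norm, which is where the structure theory of \FIAB{} groups and the behaviour of the Segal norm under $P$ are essential. Second, that one can select central elements whose Gelfand transforms are disjointly supported on $\conj$ with controlled normalisation; for compact $G$ these are built from the central projections attached to finite subsets of $\wG$, and for general SIN groups one must use the Plancherel theory of $ZL^1(G)$ to produce the analogous spectral cut-offs. Running the criterion directly in $S(G)$, rather than attempting to transport approximate amenability across the non-multiplicative projection $P$, is what keeps every estimate inside the original algebra and sidesteps the fact that $P$ is not an algebra homomorphism.
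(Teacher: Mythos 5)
Your proposal collapses at its first step: the claim that every SIN group is an \FIAB{} group, with $\overline{\operatorname{Inn}(G)}$ compact in $\Aut(G)$, is false --- the true inclusion runs the other way. Compactness of $\overline{\operatorname{Inn}(G)}$ forces every conjugacy class to be relatively compact, so such groups lie in the class \FC{}; a SIN group need not. Take $G=\bbF_2\times\mathbb{R}$ with $\bbF_2$ the discrete free group: it is SIN (the sets $\{e\}\times(-\epsilon,\epsilon)$ are invariant neighbourhoods), and it carries proper Segal algebras such as $L^1(G)\cap L^2(G)$ (proper because $G$ is non-discrete), yet the conjugacy class of $(a,0)$, $a$ a free generator, is infinite, closed and discrete, so $\operatorname{Inn}(G)$ has no compact closure and the averaging integral defining $P$ simply does not exist. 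Moreover, even on the smaller class of groups where $P$ does exist, your assertion that it ``restricts to a contractive projection of any Segal algebra $S(G)$ onto its central part'' is unjustified: a Segal algebra is required to be invariant only under left translations, not under conjugation, so there is no reason that $\beta(f)\in S(G)$ for $f\in S(G)$, let alone that the integral converges in the Segal norm. This is precisely the obstruction the paper is organised around: it builds no projection at all, but uses the SIN property only to obtain a central bounded approximate identity in $L^1(G)$, invokes Kotzmann's theorem \cite{ko} for the density of $ZS^1(G)$ in $ZL^1(G)$ --- a genuinely group-theoretic fact for which, as the paper's first Question emphasises, no soft averaging argument is known --- and invokes Mosak's theorem \cite{mo} that $ZL^1(G)$ is a semisimple regular Tauberian algebra; the remaining work (Proposition~\ref{p:central-aa-of-abstract-Segal-algebras}) is purely commutative and takes place inside the centre.

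There is a second genuine gap in your final step. An unbounded central approximate identity is not, by itself, the input the Choi--Ghahramani criterion takes: as used here, the criterion needs a sequence $(a_n)$ in the Segal algebra that is $\norm{\cdot}_{L^1}$-bounded, $\norm{\cdot}_{S}$-unbounded, and satisfies the telescoping identities $a_na_{n+1}=a_{n+1}a_n=a_n$. Your ``disjointly supported'' elements would instead satisfy $a_na_m=0$, which is not the required relation, and a generic approximate identity satisfies no exact multiplicative identity at all. Producing the telescoping sequence while keeping the $L^1$-norms uniformly bounded is where the real work lies; the paper does this via Lemma~\ref{l:1-bdd-approximate-identity}: in a regular Tauberian algebra with a $D$-bounded approximate identity one finds, for each compact $K\subseteq\Delta(ZL^1(G))$, an element $a_K$ with $\widehat{a}_K\equiv1$ on $K$ and $\norm{a_K}_{1}\leq D+\epsilon$ (a quotient-norm argument), whence $a_{K_1}a_{K_2}=a_{K_1}$ whenever $\supp(\widehat{a}_{K_1})\subseteq K_2$, and the $\norm{\cdot}_S$-unboundedness of the resulting net is then forced by the properness of $S^1(G)$. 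Your ``spectral cut-offs via Plancherel theory'' gesture toward such elements, but the uniform $L^1$-bound --- the crux of the construction --- is never addressed.
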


We prove this theorem, for a generalized version of Segal algebras, called \ma{abstract Segal algebras}.  A Banach algebra $({B},\|\cdot\|_{{B}})$ is  an { abstract Segal algebra} of a Banach algebra 
$({A},\|\cdot\|_{A})$ if 
 ${B}$ is a dense left ideal in ${\cA}$,
 there exists $C>0$ such that $\|b\|_{\cA} \leq C
\|b\|_{B}$
(for each $b\in {B}$), and 
there exists $M>0$ such that $\|ab\|_{B}\leq
M\|a\|_{\cA}\|b\|_{B}$ for all  $a,b \in {B}$. We call $B$ a \ma{proper} abstract  Segal algebra of $A$ if $B\neq A$.
It is clear that every (proper) Segal algebra of a locally compact group $G$ is a (proper) abstract Segal algebra of $L^1(G)$. 

\vskip.5em

Let $A$ be a commutative Banach algebra and let $\Sp{ }{A}$ denote the   \ma{Gelfand spectrum} of $A$ and for each $a\in A$, $\widehat{a}$ is the Gelfand transform of $a$.  For definitions related to the Gelfand spectrum of commutative Banach algebras, we refer to \cite{ka}.
We denote the set of all elements  $a\in A$ such that $\supp(\widehat{a})$  is compact by $A_c$. A semisimple commutative Banach algebra $A$ is called a \ma{Tauberian algebra} when  $A_c$ is dense in $A$.

 For a  Banach algebra $A$ and a constant $D> 0$, $A$ has a \ma{$D$-bounded approximate identity} if there is a net $(e_\alpha)_{\alpha}\subseteq A$ such that for every $a\in A$, $\norm{ae_\alpha - a}_A \rightarrow 0$, $\norm{e_\alpha a - a}_A \rightarrow 0$, and $\sup_\alpha \norm{e_\alpha}_A\leq D$.
Note that if $A$ is a unital commutative Banach algebra with the unit $e\in A$, then $u$ is constantly one on $\Sp{}{A}$. 
   The following lemma shows that the existence of a bounded approximate identity approximately plays a similar  role for a regular Tauberian algebra. 
  
\vskip2.0em
  
\begin{lemma}\label{l:1-bdd-approximate-identity}
Let $A$ be a regular commutative Tauberian Banach algebra. Then $A$ has a $D$-bounded approximate identity if and only if for each compact set $K\subseteq \Sp{}{A}$ and $\epsilon>0$, there is some $a_{K,\epsilon}\in A$ such that $\norm{a_{K,\epsilon}}_A\leq D$ and $\widehat{a}_{K,\epsilon}|_K \equiv 1$.
\end{lemma}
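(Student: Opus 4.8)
The plan is to prove both directions of this equivalence, with the forward direction being essentially formal and the reverse direction requiring the regularity and Tauberian hypotheses to construct a genuine approximate identity from the local ``plateau'' functions $a_{K,\epsilon}$. I would set things up by recalling that since $A$ is regular and Tauberian, the Gelfand transforms separate points and the elements of $A_c$ are dense, so the spectrum $\Delta(A)$ is locally compact and every compact set is contained in the interior of a slightly larger compact set carrying a function that is identically $1$ there.

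For the forward direction, suppose $A$ has a $D$-bounded approximate identity $(e_\alpha)$. Given a compact $K\subseteq\Delta(A)$ and $\epsilon>0$, I would first use regularity and the Tauberian property to pick a fixed $b\in A_c$ with $\widehat b|_K\equiv 1$ (a single plateau function, not yet norm-controlled). Then $be_\alpha\to b$ in $\|\cdot\|_A$, and since Gelfand transform is norm-decreasing into $C_0(\Delta(A))$, we get $\widehat{be_\alpha}\to\widehat b$ uniformly; in particular $\widehat{be_\alpha}|_K\to 1$ uniformly on $K$. The issue is that $\widehat{be_\alpha}$ is only close to $1$ on $K$, not exactly $1$, so I would need a rescaling or a second application of regularity to correct the value on $K$ to be exactly $1$ while keeping the $A$-norm below $D$ (plus a vanishing error absorbed into $\epsilon$, or by reformulating the statement as $D$ up to an arbitrarily small slack). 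This is where I would be most careful: the cleanest route is to show that the set $\{a\in A:\widehat a|_K\equiv 1\}$ is nonempty and that its intersection with the $D$-ball is nonempty, using that $be_\alpha$ lies within $\epsilon$ of such a function.

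For the reverse direction, which I expect to be the substantive part, I would assemble the net directly from the hypothesis. Index the net by pairs $\alpha=(K,\epsilon)$ where $K$ ranges over compact subsets of $\Delta(A)$ and $\epsilon>0$, ordered by $(K,\epsilon)\preceq(K',\epsilon')$ iff $K\subseteq K'$ and $\epsilon\geq\epsilon'$; set $e_\alpha:=a_{K,\epsilon}$, so that $\sup_\alpha\|e_\alpha\|_A\leq D$ automatically. To verify $\|ae_\alpha-a\|_A\to 0$ for every $a\in A$, I would first check it on the dense subalgebra $A_c$: given $a\in A_c$, its transform $\widehat a$ has compact support $S=\supp(\widehat a)$, and for all $\alpha$ large enough that $S\subseteq K$, I would argue $\widehat{ae_\alpha}=\widehat a\,\widehat{e_\alpha}=\widehat a$ on all of $\Delta(A)$ (since $\widehat{e_\alpha}\equiv 1$ on $K\supseteq S$ and $\widehat a$ vanishes off $S$), hence $\widehat{ae_\alpha-a}\equiv 0$; by semisimplicity this forces $ae_\alpha=a$, so the convergence is eventually exact on $A_c$.

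The main obstacle is passing from $A_c$ to all of $A$, because the product $ae_\alpha$ need not have any control outside $A_c$ and the $e_\alpha$ are not a priori multiplicatively nice. Here the uniform bound $D$ is essential: for general $a\in A$ and $\delta>0$, I would choose $a'\in A_c$ with $\|a-a'\|_A<\delta$, and then estimate
\[
\|ae_\alpha-a\|_A\leq\|a-a'\|_A\,\|e_\alpha\|_A+\|a'e_\alpha-a'\|_A+\|a'-a\|_A\leq(D+1)\delta+\|a'e_\alpha-a'\|_A,
\]
and once $\alpha$ is large enough (so that $\supp(\widehat{a'})\subseteq K$) the middle term vanishes, giving $\limsup_\alpha\|ae_\alpha-a\|_A\leq(D+1)\delta$; letting $\delta\to0$ finishes the two-sided convergence by commutativity. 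The only remaining care is to confirm that the index set is genuinely directed and that for each fixed $a'\in A_c$ the tail condition $\supp(\widehat{a'})\subseteq K$ is eventually met, which holds because compact subsets of $\Delta(A)$ are cofinal under inclusion.
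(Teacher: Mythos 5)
Your reverse direction is correct and is essentially the paper's own argument: index by pairs $(K,\epsilon)$, observe that $ae_\alpha=a$ holds \emph{exactly} once $\supp(\widehat a)\subseteq K$ (using semisimplicity, which is built into the paper's definition of Tauberian), and pass from $A_c$ to all of $A$ by density together with the uniform bound $D$. The problem is in your forward direction, at precisely the point you flagged. Your candidates $be_\alpha$ carry no useful norm control: $\|be_\alpha\|_A\to\|b\|_A$, and the plateau function $b$ was chosen with no bound whatsoever on its norm, so ``correcting $be_\alpha$ on $K$ while keeping the $A$-norm below $D$'' cannot work --- these elements are nowhere near the $D$-ball to begin with. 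Your proposed ``cleanest route'' is a non sequitur for the same reason: knowing that $be_\alpha$ lies within $\epsilon$ of the set $S_K=\{a\in A:\widehat a|_K\equiv 1\}$ only yields an element of $S_K$ of norm about $\|be_\alpha\|_A+\epsilon\approx\|b\|_A+\epsilon$, not $\leq D+\epsilon$. To intersect $S_K$ with a small ball you need a point \emph{of the $D$-ball} that is close to $S_K$, and that point is $e_\alpha$ itself, not $be_\alpha$.

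The missing idea, which is the heart of the paper's proof, is to work modulo the ideal $I_K=\{c\in A:\widehat c|_K\equiv 0\}$. Then $S_K$ is the coset $b+I_K$, and the key observation is that $be_\alpha-e_\alpha\in I_K$ (since $\widehat b\equiv 1$ on $K$), so $be_\alpha$ and $e_\alpha$ represent the \emph{same} coset. Hence
\[
\|b+I_K\|_{A/I_K}=\lim_\alpha\|be_\alpha+I_K\|_{A/I_K}=\lim_\alpha\|e_\alpha+I_K\|_{A/I_K}\leq\sup_\alpha\|e_\alpha\|_A\leq D,
\]
where the first equality uses $be_\alpha\to b$. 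By definition of the quotient norm there is $c\in I_K$ with $\|b+c\|_A<D+\epsilon$, and $\widehat{b+c}|_K\equiv 1$, as required. (In your language: $\mathrm{dist}(e_\alpha,S_K)\leq\|be_\alpha-b\|_A\to 0$, so the $D$-ball element $e_\alpha$ is eventually $\epsilon$-close to $S_K$.) Note that this argument, like the paper's own, only achieves the bound $D+\epsilon$ rather than $D$; the lemma should be read with that slack --- this is what the $\epsilon$ in $a_{K,\epsilon}$ is for --- so your parenthetical remark about allowing an arbitrarily small slack matches what the paper actually proves, but the norm-control mechanism itself is what your proposal is missing.
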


\begin{proof} Suppose that $(e_\alpha)_\alpha$ is a bounded approximate identity of $\cA$ such that $\norm{e_\alpha}_A \leq D$ for some $D>0$.
For each $K\subseteq \Sp{}{A}$, let $b_K\in A_c$ such that $b_K|_K\equiv 1$ and $I_K$ be the ideal $\{b\in A:\; \widehat{b}(K)=\{0\}\}$. Therefore, for each $b\in  A$, $bb_K -b_K \in I_K$. Considering the quotient norm of $\cA/I_K$, one gets  $\norm{b_K + I_K}_{A/I_K}=\lim_\alpha \norm{ b_Ke_\alpha + I_K}_{A/I_K} = \lim_\alpha \norm{ e_\alpha + I_K}_{A/I_K} \leq D$.
 So, there is some $b\in A_c \cap I_K$ such that $\norm{ b_K + b }_A < D+\epsilon$. Note that for $a_K:=(b_K+b)$, $a_K|_K\equiv 1$ and $a_K \in A_c$.
 
 Conversely, for each $\epsilon>0$ and $K\subseteq \Sp{}{A}$ compact, let ${a}_{K}\in A$ such that $\widehat{a}_{K}|_K \equiv 1$ and $\norm{a_{K}}_A \leq D (1 + \epsilon)$. Define $e_{K,\epsilon}:=(1+\epsilon)^{-1}a_{K}$. It is not hard to show that $(e_{K,\epsilon})_{K,\epsilon}$ forms an approximate identity of the Tauberian algebra $A$ which is $\norm{\cdot}_A$-bounded by $D$ where $\epsilon \rightarrow 0$ and $K \rightarrow \Sp{}{A}$.   
\end{proof}

Let $B$ be an abstract Segal algebra with respect to a Banach algebra $A$ and $A$ has a $\norm{\cdot}_A$-bounded approximate identity. Then, by an approximation argument, one can show that $A$ has a $\norm{\cdot}_A$-bounded approximate identity  which lies in $B$.
If $A$ is a Tauberian algebra, the $\norm{\cdot}_A$-bounded approximate identity of $A$ may belong to $A_c \cap B$.
The density  condition and relation of the norms implies that a proper abstract Segal algebra never has a bounded approximate identity.

For a Banach algebra $A$, let $ZA$ denote the \ma{center} of $A$ which is the commutative subalgebra of $A$ consisting of all elements $a\in A$ such that $ab=ba$ for every $b\in A$.
The following proposition proves the non-approximate amenability of Segal algebras with notable centres. 

\vskip2.0em

\begin{prop}\label{p:central-aa-of-abstract-Segal-algebras}
Let $B$ be a proper abstract Segal algebra of a Banach algebra $A$ which has a central bounded approximate identity and $ZB$ is dense in $ZA$. If $ZA$ is a regular Tauberian algebra, then $B$ is not approximately amenable.
\end{prop}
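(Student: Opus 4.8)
The plan is to argue by contradiction and to push the whole problem down to the centre. Assume that $B$ is approximately amenable. The first move is to observe that $ZB$ is itself a \emph{proper} abstract Segal algebra of $ZA$: since $B$ is a left ideal in $A$, for $z\in ZA$ and $c\in ZB$ the product $zc$ lies in $B$ and is again central, so $ZB$ is an ideal of $ZA$, while the density and the two norm inequalities are inherited verbatim from those for $B$ inside $A$. Properness is the one point that genuinely uses the hypotheses: were $ZB=ZA$, the open mapping theorem would make $\norm{\cdot}_A$ and $\norm{\cdot}_B$ equivalent on $ZA=ZB$, so the central $\norm{\cdot}_A$-bounded approximate identity $(e_\alpha)$ of $A$ would be $\norm{\cdot}_B$-bounded and, being central with $\norm{e_\alpha b-b}_A\to 0$, would be a bounded approximate identity for all of $B$ (first on the dense set of products, then everywhere, using that $\sup_\alpha\norm{e_\alpha}_A<\infty$ controls the multiplier norms via $\norm{e_\alpha b}_B\le M\norm{e_\alpha}_A\norm{b}_B$). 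This contradicts the fact, recorded above, that a proper abstract Segal algebra has no bounded approximate identity. Hence $ZB\neq ZA$.

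Next I would record that $ZB$ inherits from $ZA$ the property of being semisimple, commutative, regular and Tauberian (a standard fact for dense ideals, using $\Sp{}{ZB}\cong\Sp{}{ZA}$), and that $ZA$ has a bounded approximate identity, namely $(e_\alpha)$, which lies in $ZA$. Thus Lemma~\ref{l:1-bdd-approximate-identity} is available for $ZB$. Because $ZB$ is proper it has \emph{no} bounded approximate identity, so the contrapositive of Lemma~\ref{l:1-bdd-approximate-identity} yields, for every $D>0$, a compact $K\subseteq \Sp{}{ZB}$ admitting no $a\in ZB$ with $\widehat a|_K\equiv 1$ and $\norm{a}_{ZB}\le D$. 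Taking for each $n$ such a compactum for the threshold $D=n$ and replacing it by the union $K_1\subseteq K_2\subseteq\cdots$ of the first $n$ of them, and then using regularity to choose $a_n\in ZB$ with $\widehat{a_n}|_{K_n}\equiv 1$, $\supp\widehat{a_n}\subseteq K_{n+1}$ compact, produces a central sequence with $\norm{a_n}_B=\norm{a_n}_{ZB}>n\to\infty$ and, by semisimplicity, the nesting relation $a_n a_{n+1}=a_{n+1}a_n=a_n$.

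The heart of the argument is then to see that the existence of such a central, nested, norm-unbounded family is incompatible with approximate amenability of $B$. Since the $a_n$ lie in the centre they behave inside $B$ exactly as a local-unit sequence behaves in the commutative setting treated in \cite{ma}, and one feeds them into the Choi--Ghahramani criterion \cite{ch} for ruling out approximate amenability: approximate amenability of $B$ should force a uniform bound $\sup_n\norm{a_n}_B<\infty$, contradicting $\norm{a_n}_B\to\infty$ and finishing the proof. This last step is where I expect the main difficulty to sit. The reduction to the centre and the blow-up of the local units are routine once Lemma~\ref{l:1-bdd-approximate-identity} is in hand; the delicate part is transporting the approximate amenability of the possibly noncommutative algebra $B$ onto the central sequence, i.e.\ verifying that centrality makes the bimodule and derivation construction of \cite{ch,ma} run inside $B$ and that the relations $a_na_{n+1}=a_n$ together with the disjointness of the incremental supports $\supp\widehat{a_{n+1}}\setminus K_n$ supply precisely the hypotheses that the criterion requires.
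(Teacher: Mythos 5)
You have the right architecture --- reduce to the centre, build nested central local units, and invoke the Choi--Ghahramani criterion --- but there is a genuine gap exactly where you placed your bet. The criterion of \cite{ch} does not apply to a sequence that is merely nested and $\norm{\cdot}_B$-unbounded: it requires in addition that the $a_n$ act as \emph{uniformly bounded multipliers} on $B$ (in the paper's formulation, that the sequence be $\norm{\cdot}_A$-bounded, which yields the multiplier bound through $\norm{a_nb}_B\le M\norm{a_n}_A\norm{b}_B$). Your $a_n$ are produced by bare regularity of $ZB$, which gives no control at all on $\norm{a_n}_A$, so your concluding step (``approximate amenability of $B$ should force $\sup_n\norm{a_n}_B<\infty$'') cites no applicable theorem. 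The bound is not a removable technicality: the amenable algebra $C_0(\mathbb{R})$ contains sequences $(f_n)$ of compactly supported functions with $f_{n+1}\equiv 1$ on $\supp f_n$ and $\norm{f_n}_\infty=n$, so nestedness plus norm blow-up alone can never contradict (approximate) amenability. Incidentally, the difficulty you flagged --- ``transporting'' approximate amenability of the noncommutative algebra $B$ to its centre --- is not where the work lies: the criterion is stated for arbitrary Banach algebras and is applied to $B$ itself; centrality only serves to make the nesting relations two-sided and the multiplier bound available.

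The paper avoids your gap by using Lemma~\ref{l:1-bdd-approximate-identity} in the opposite direction. Since $ZA$ (not $ZB$) has a bounded approximate identity --- the central one of $A$ --- the forward implication of the lemma supplies, for every compact $K\subseteq\Delta(ZA)$, a local unit $a_K\in ZA_c\subseteq ZB$ with $\widehat{a}_K|_K\equiv 1$ \emph{and} $\norm{a_K}_A\le D+\epsilon$; that is the missing uniform bound. The $\norm{\cdot}_B$-unboundedness is then extracted from properness of $B$ itself: were $(a_K)$ $\norm{\cdot}_B$-bounded, then for the approximate identity $(e_\alpha)\subseteq ZA_c\cap ZB$ of $A$ one would have $\norm{e_\alpha}_B=\norm{e_\alpha a_K}_B\le M\norm{e_\alpha}_A\norm{a_K}_B$ whenever $K\supseteq\supp\widehat{e}_\alpha$, so $(e_\alpha)$ would be a $\norm{\cdot}_B$-bounded approximate identity of $A$, forcing $B=A$. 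This also bypasses your preliminary claim that $ZB\subsetneq ZA$, whose proof has its own soft spot: upgrading $\norm{e_\alpha b-b}_A\to 0$ to $\norm{e_\alpha b-b}_B\to 0$ ``first on the dense set of products'' assumes that products are $\norm{\cdot}_B$-dense in $B$, which is not among the axioms of an abstract Segal algebra. Finally, a small but real slip in your induction: regularity lets you put $\supp\widehat{a}_n$ inside a prescribed \emph{open} neighbourhood of $K_n$, not inside a predetermined compact set $K_{n+1}$; the compacta must be enlarged after each choice of $a_n$ (or, as in the paper, one directs the whole family by inclusion over all compact subsets of $\Delta(ZA)$) rather than fixed in advance.
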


\begin{proof} 
To prove that such an abstract  Segal algebra is not approximately amenable, we apply the criterion developed in \cite{ch}. To do so, we should construct a sequence $(a_n)_{n\in \Bbb{N}}$ in $B$ such that is $\norm{\cdot}_A$-bounded, $\norm{\cdot}_B$-unbounded, and satisfying $a_na_{n+1}=a_{n+1}a_n=a_n$ for every $n\in \Bbb{N}$. 

Fist note that $ZB$ is an abstract Segal algebra of $ZA$.  For a fixed $\epsilon>0$ and $K_0 \subseteq \Delta({ZA})$,  for each compact set $K$ such that $K_0\subseteq K\subseteq \Delta(ZA)$, there is some $a_K\in ZA_c$ such that $\widehat{a}_K|_K\equiv 1$ and $\norm{a_K}_A \leq D+\epsilon$, by Lemma~\ref{l:1-bdd-approximate-identity}. Define the $\norm{\cdot}_A$-bounded net $(a_K)_{K_0\subseteq K \subseteq \Delta(ZA)}$ as above directed by inclusion over compact sets $K$. Therefore $a_{K_1} a_{K_2}= a_{K_1}$ if $\supp(\widehat{a}_{K_1})\subseteq K_2$.

We claim that $(a_K)_{K_0 \subseteq K \subseteq \Omega_{Z\cA}}$ is $\norm{\cdot}_B$-unbounded. Note that  $ZB$ is a Tauberian algebra. Therefore,  $A$ has a $\norm{\cdot}_A$-bounded approximate identity $(e_\alpha)\subseteq  ZA_c \cap ZB$.  So, for each $\alpha$, for $K=\supp(e_\alpha)$, $e_\alpha a_K =e_\alpha$. Hence,
\[
\norm{e_\alpha}_B = \lim_{K_0 \subseteq K \rightarrow \Delta(ZA)} \norm{e_\alpha a_K }_B \leq \limsup_{K_0 \subseteq K} \norm{e_\alpha}_A \norm{a_K}_B.
\]
Therefore, if $(a_K)_{K_0\subseteq K}$ is $\norm{\cdot}_B$-bounded, $(e_\alpha)_\alpha$ is a $\norm{\cdot}_B$-bounded approximate identity of $A$ which violates the properness of $B$.

 To generate a sequence which satisfies the desired conditions  mentioned before, fix a non-empty compact set $K_0\subseteq \Delta(ZA)$. By our claim, we inductively construct a sequence of compact sets $K_0 \subset K_1 \subset \cdots $ in $ \Delta(ZA)$ such that $a_{K_n} a_{K_{n-1}}=a_{K_{n-1}}$ and $\norm{B}{a_{K_n}} \geq n$ for all $n\in\Bbb{N}$. Then $B$ is not approximately amenable.  \end{proof}

Now we can prove the main theorem  of the paper.

\vskip1.0em

\noindent{\it Proof of Theorem~\ref{t:main-thm}.} 
Note that for every   SIN group $G$, $L^1(G)$ has a central bounded approximate identity. Moreover, for each Segal algebra $S^1(G)$, $ZS^1(G)$ is dense in $ZL^1(G)$, \cite[Theorem~2]{ko}.  On the other hand,    
\cite[Theorem~1.8]{mo} implies that $ZL^1(G)$ is a semisimple regular commutative Tauberian algebra. So Proposition~\ref{p:central-aa-of-abstract-Segal-algebras} can be applied to finish the proof. {\hfill$\Box$\newline\ignorespacesafterend}

\ques{Applying some results about structure of locally compact groups, Kotzmann, \cite{ko}, showed that for every Segal algebra $S^1(G)$, $ZS^1(G)$ is dense in $ZL^1(G)$. The group structure in his proof is essential. It seems that there is not any immediate argument to generalize this proof for a wider class  of abstract Segal algebras. It would be of interest if one can generalize this result to abstract Segal algebras. In other words, is there  an abstract Segal algebra whose centre is not dense in its ancestor?}

\ques{Is every proper Segal algebra of a locally compact group not approximately amenable?}
\vskip1.0em


\noindent{\bf \textsf{Acknowledgements.}}
This research was supported by a Ph.D. Dean's Scholarship at  University of Saskatchewan and a Postdoctoral Fellowship from the Fields Institute  and University of Waterloo.  These supports are gratefully acknowledged. The author also would like to express his deep gratitude to Yemon Choi and Ebrahim Samei for many constructive discussions.
\footnotesize

\bibliographystyle{plain}
\bibliography{Bibliography}
\vskip1.5em

{ 
{\normalsize Mahmood Alaghmandan}\newline\indent
\vskip0.1em

\footnotesize   The Fields Institute For Research In Mathematical Sciences,\newline\indent
222 College St,\newline\indent
Toronto, ON M5T 3J1, Canada}

\vskip1em
\&

\vskip1em

Department of Pure Mathematics, \newline\indent
University of Waterloo, \newline\indent
Waterloo, ON N2L
3G1, Canada

\vskip1em

Email: \texttt{m.alaghmandan@utoronto.ca}

\end{document}